\theoremstyle{definition}
\newtheorem{definition}{Definition}
\theoremstyle{plain}
 \newtheorem{thm}{Theorem}
 \newtheorem{cor}{Corollary}
 \newtheorem{prop}{Proposition}
\theoremstyle{remark}
\newtheorem{remark}{Remark}
\newcommand{\bea}{\begin{eqnarray}}
\newcommand{\eea}{\end{eqnarray}}
\newtheorem{lemma}{Lemma}
\newcommand{\beq}{\begin{equation}}
\newcommand{\eeq}{\end{equation}}
\newcommand{\enn}{\nonumber \end{equation}}
 \newcommand{\rk}{{\rm r}\,}
 \newcommand{\cF}{\mathcal{F}}
 \newcommand{\cB}{\mathcal{B}}
\title[On a conjecture of Gross, Mansour and Tucker for $\Delta$-matroids]{On a conjecture of Gross, Mansour and Tucker for $\Delta$-matroids}
\author[R\'emi Cocou Avohou]{R\'emi Cocou Avohou}
\address[1]{Okinawa Institute of Science and Technology Graduate University, 1919-1, Tancha, Onna, Kunigami District, Okinawa 904-0495, Japan, \& ICMPA-UNESCO Chair, 072BP50, Cotonou, \& Ecole Normale Superieure, B.P 72, Natitingou, Benin,\newline {\tt \href{mailto:remi.avohou@oist.jp}{remi.avohou@oist.jp}}}
\begin{document}

\maketitle

\begin{abstract}
Gross, Mansour, and Tucker introduced the partial-duality polynomial of a ribbon graph [Distributions, European J. Combin. 86, 1--20, 2020], the generating function enumerating partial duals by Euler genus. Chmutov and Vignes-Tourneret wondered if this polynomial and its conjectured properties would hold for general delta-matroids, which are combinatorial abstractions of ribbon graphs. Yan and Jin contributed to this inquiry by identifying a subset of delta-matroids--specifically, even normal binary ones--whose twist polynomials are characterized by a singular term. Building upon this foundation, the current paper expands the scope of investigation to encompass even non-binary delta-matroids, revealing that none of them have width-changing twists.
\end{abstract}

\section{Introduction}
Chmutov introduced partial duality, a generalization of geometric duality for ribbon graphs, inspired by the Bollob\'as-Riordan and Jones-Kauffman polynomials \cite{MR2507944}. Partial duality allows one to dualize only some edges of a ribbon graph, and obtain a partial dual. Gross, Mansour, and Tucker defined the partial-dual genus polynomial as a generating function that enumerates the partial duals of a ribbon graph by their genus \cite{MR4056111}. This polynomial has a counterpart for delta-matroids, which are combinatorial models of ribbon graphs.

Delta-matroids, introduced by Bouchet \cite{MR2507944}, are a generalization of matroids that capture the essence of graph theory. A delta-matroid has feasible sets, analogous to bases of a matroid, which can have different sizes but satisfy the Symmetric Exchange Axiom. Delta-matroids can also encode information about how a graph is embedded on a surface. Bouchet \cite{MR1020647} showed that ribbon graphs, which are graphs with cyclic ordering of edges around each vertex, have delta-matroids that reflect their properties. For example, quasi-trees, which are subgraphs with one boundary cycle, are the same as spanning-trees, which are genus-zero spanning ribbon subgraphs. The edge set and the spanning quasi-trees of a ribbon graph form a delta-matroid, as a surprising result. A key connection between the two theories is the twist operation on delta-matroids, which corresponds to partial duality on ribbon graphs. This operation has many implications for the delta-matroid polynomial, which is a generalization of the Tutte polynomial.

Some graph polynomials, such as the Tutte polynomial, are better viewed as matroid polynomials, since they depend only on the matroid structure of the graph. Recently, there has been a lot of interest in extending the Tutte polynomial to graphs embedded on surfaces. Three such extensions are the Las Vergnas polynomial, the Bollob\'as-Riordan polynomial, and the Kruskal polynomial, which are defined for embedded graphs. These polynomials have been further generalized to delta-matroids as delta-matroid polynomials 
Gross, Mansour, and Tucker \cite{MR4056111} defined the partial-dual Euler genus polynomials and the partial-dual orientable genus polynomials for ribbon graphs, which count their partial duals by their genus. They conjectured that no orientable ribbon graph has a non-constant partial-dual polynomial with one non-zero term. This conjecture was refuted by an infinite family of counterexamples in \cite{MR4185110}. Chmutov and Vignes-Tourneret \cite{MR4271645} showed that these are the only counterexamples, and raised the question of whether the partial-dual polynomials and conjectures make sense for general delta-matroids. Yan and Jin \cite{MR4420997} introduced the twist polynomials for delta-matroids, which are analogous to the partial-dual polynomials for ribbon graphs. They also characterized the even normal binary delta-matroids with one term twist polynomials, and solved the odd normal binary case in \cite{Qi2022Xian}. They partially answered the question for normal binary delta-matroids, and left open the question for non-binary delta-matroids.

We organize the paper as follows. In Section \ref{sect:prelimi}, we recall the definitions and properties of delta-matroids, partial-duality polynomials of ribbon graphs and delta-matroids, and some other basic concepts. In Section \ref{mainresults}, we prove our main result for even normal non-binary delta-matroids: none of them has a non-constant twist polynomial with one non-zero term.

\section{Preliminaries}
\label{sect:prelimi}
Let $D$ be a $\Delta$-matroid with a finite ground set $E$ and a collection $\cF$ of subsets of $E$ called \emph{feasible sets}, which satisfy the following condition:

(SEA) For any $F_1, F_2 \in \cF$ and $x \in F_1\Delta F_2$, we have $F_1\Delta {x, y} \in \cF$ whenever $y \in F_2\Delta F_1$. Note that $x=y$ is allowed.

Given a delta-matroid $D=(E, \cF)$, the largest feasible sets of $D$ form the bases of the \emph{upper matroid}, while the smallest feasible sets of $D$ form the bases of the \emph{lower matroid}. These are two matroids that are contained in $D$.

Every $\Delta$-matroid $D = (E, \cF)$ has a \emph{dual} $\Delta$-matroid $D^\star = (E, \cF^\star)$, where $\cF^\star = {E\setminus F | F \in \cF}$. An element of $E$ that belongs to no feasible set of $D$ is a \emph{loop} of $D$, while an element of $E$ that belongs to no feasible set of $D^\star$ is a \emph{coloop} of $D$. Observe that the lower (upper) matroid of $D$ is dual to the upper (lower) matroid of $D^\star$.

\begin{definition}[Elementary minors] \label{def:minor}
Let $D=(E, \cF)$ be a delta-matroid. The elementary minors of $D$ at $e\in E$, are the delta-matroids $D-e$ and $D/e$ defined by:
$$D-e=\Big(E-e, \big\{F | F\subseteq E-e, F\in \cF\big\}\Big),$$
if $e$ is not a coloop, and 
$$D/e=\Big(E-e, \big\{F | F\subseteq E-e, F\cup e\in \cF\big\}\Big),$$
if $e$ is not a loop. In case $e$ is a loop or a coloop, we set $D/e=D-e$.
The delta-matroid $D-e$ is called the deletion of $D$ along $e$, and $D/e$ the contraction of $D$ along $e$.
\end{definition}

A \emph{minor} of $D$ is a $\Delta$-matroid that is obtained from a $\Delta$-matroid $D$ by a (potentially empty) sequence of contractions and deletions.

Assume that $A[W]=(a_{vw}: v, w\in W)$ for $W\subseteq E$ and that $A=(a_{vw}: v, w\in E)$ is a symmetric binary matrix. $D(A)=(E, \{W: A[W] \text{ has an inverse}\})$ is a $\Delta$-matroid if and only if $A[\emptyset]$ has an inverse.

\begin{definition}[Twist]\label{def:twist}
Let $D=(E, \cF)$ be a set system. For $A\subseteq E$, the twist of $D$ with respect to $A$, denoted by $D\star A$, is given by $(E, \{A\Delta X | X\in \cF\})$.
\end{definition}

We recall that $D^\star = D\star E$ is the dual $D^\star$ of $D$.
\begin{definition}[Binary delta-matroid \cite{MR1020647, MR1083464}]\label{def:binarydeltamat}
A  delta-matroid $D=D(E, \cF)$ is said to be binary if there exists $F\in \cF$ and a symmetric binary matrix $A$ such that $D=D(A)\star F$.
\end{definition}

A series of contractions and deletions results in the minor of a delta-matroid $D$. The following proposition where introduced in \cite{MR1020647,MR1083464}

\begin{prop}
If $D$ is a binary delta-matroid, then every elementary minor of $D$ is also a binary delta-matroid.
\end{prop}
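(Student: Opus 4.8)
The plan is to show that binariness is preserved under each of the two elementary minor operations—deletion $D-e$ and contraction $D/e$—separately, and then invoke Definition~\ref{def:twist} to handle the interaction with the defining twist. Recall that by Definition~\ref{def:binarydeltamat}, $D = D(A)\star F$ for some symmetric binary matrix $A$ over $\mathrm{GF}(2)$ (indexed by $E$) and some $F\in\cF$. The first reduction is to observe that it suffices to treat the case $F = \emptyset$, i.e.\ $D = D(A)$: indeed, since twist is an involution that commutes appropriately with taking minors—specifically $(D\star F) - e = (D - e)\star(F\setminus e)$ and $(D\star F)/e = (D/e)\star(F\setminus e)$ when $e\notin F$, with the roles of $D-e$ and $D/e$ swapped when $e\in F$ because twisting by a singleton interchanges loop/coloop status and hence deletion/contraction at that element—any elementary minor of $D=D(A)\star F$ is a twist (by a subset of $E-e$) of an elementary minor of $D(A)$. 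So if elementary minors of matrix-representable delta-matroids are again binary, we are done after re-applying a twist, which trivially preserves binariness.

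Next I would handle $D(A)$ directly. For deletion $D(A)-e$: the feasible sets are $\{W\subseteq E-e : A[W]\text{ invertible}\}$, and these are exactly the sets $W\subseteq E-e$ with $A'[W]$ invertible where $A' = A[E-e]$ is the principal submatrix of $A$ obtained by deleting the row and column indexed by $e$. Since $A'$ is again symmetric binary and $A'[\emptyset]=A[\emptyset]$ is invertible, $D(A') = D(A)-e$ is binary by construction. For contraction $D(A)/e$ (when $e$ is not a loop, so some feasible set contains $e$, meaning $a_{ee}=1$ or $e$ lies in a larger invertible principal submatrix): here one uses the standard fact that $A[W\cup e]$ is invertible iff $A[e]$ is invertible and the Schur complement $A/e := A[E-e] - A[E-e,e]\,A[e]^{-1}\,A[e,E-e]$ restricted to $W$ is invertible—i.e.\ $W$ is feasible in $D(A/e)$. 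The Schur complement of a symmetric matrix is symmetric, and it is binary since we are over $\mathrm{GF}(2)$ and $a_{ee}^{-1}=a_{ee}=1$; moreover $(A/e)[\emptyset]$ is the empty matrix, invertible. Hence $D(A)/e = D(A/e)$ is binary. The only subtlety is the case where $a_{ee}=0$ but $e$ is still not a loop: then feasibility of some $W\cup e$ forces a $2\times 2$ invertible block at $e$ and some $f$, and one performs a principal pivot (partial inverse) $A \ast\{e\}$ first—which keeps the matrix symmetric binary and realizes the twist $D(A)\star e$—reducing to the previous case; this is exactly the mechanism by which Bouchet's representability theory is set up.

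The main obstacle, and the step requiring the most care, is precisely this last point: verifying that the relevant pivot/Schur-complement operations stay within symmetric binary matrices and correctly track which element becomes a loop or coloop, so that the case $e$ a loop or coloop (where $D/e = D-e$ by Definition~\ref{def:minor}) is consistent with the matrix picture. I would organize the write-up as: (i) a lemma that $(D\star A)/e$ and $(D\star A)-e$ are twists of elementary minors of $D$; (ii) the deletion computation for $D(A)$ (immediate); (iii) the contraction computation via Schur complement, with the pivot handling the $a_{ee}=0$ subcase; (iv) assembling these with step~(i) and the observation that binariness is twist-invariant. Everything except the bookkeeping in (iii) is routine linear algebra over $\mathrm{GF}(2)$.
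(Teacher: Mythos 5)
The paper does not actually prove this proposition; it states it with a citation to Bouchet, so there is no in-text argument to compare against. Your proof is the standard (Bouchet-style) one and is essentially correct: reduce to $D=D(A)$ via the twist--minor identities of Proposition~\ref{prop:twistdelet}, realize deletion as passage to the principal submatrix $A[E-e]$, and realize contraction as a Schur complement when $a_{ee}=1$. Two points need care. First, your reduction uses that a twist of a binary delta-matroid by an \emph{arbitrary} subset is again binary, whereas Definition~\ref{def:binarydeltamat} only allows twisting by a feasible set; this is fine, but it requires the one-line observation that $(D(B)\star G)\star F'=D(B)\star(G\Delta F')$ and that $G\Delta F'$ is feasible in the twisted delta-matroid, so the definition is still witnessed. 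Second, the subcase $a_{ee}=0$ with $e$ not a loop is wrong as literally written: the principal pivot $A\ast\{e\}$ does not exist there (the $1\times 1$ block is singular), and $D(A)\star e$ need not be of the form $D(B)$ for any symmetric $B$ (it may fail to have $\emptyset$ feasible). What works, and what you clearly intend, is to pivot on an invertible $2\times 2$ block $\{e,f\}$ --- such an $f$ exists by the Symmetric Exchange Axiom applied to $\emptyset$ and a feasible set containing $e$ --- giving $D(A\ast\{e,f\})=D(A)\star\{e,f\}$, and then to compute $D(A)/e=\bigl(D(A\ast\{e,f\})\setminus e\bigr)\star\{f\}$ via Proposition~\ref{prop:twistdelet}; note this lands in the already-settled deletion-plus-twist case, not in the $a_{ee}=1$ contraction case as you suggest (pivoting on $\{e,f\}$ need not make the new diagonal entry at $e$ nonzero). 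With that repair the argument is complete.
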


\begin{prop}\label{prop:minimalnonbinary} A delta-matroid is binary if it has no minor
isomorphic to a twist of $S_1$, $S_2$, $S_3$, $S_4$, or $S_5$, where

\begin{eqnarray}
&&S_1=\Big(\{1,2,3\}, \big\{\emptyset, \{1,2\}, \{1,3\}, \{2,3\}, \{1,2,3\}\big\}\Big),\cr\cr
&&S_2= \Big(\{1,2,3\}, \big\{\emptyset, \{1\}, \{2\}, \{3\}, \{1,2\}, \{1,3\}, \{2,3\}\big\}\Big),\cr\cr
&&S_3= \Big(\{1,2,3\}, \big\{\emptyset, \{2\}, \{3\}, \{1,2\}, \{1,3\}, \{1,2,3\}\big\}\Big),\cr\cr
&&S_4= \Big(\{1,2,3,4\}, \big\{\emptyset, \{1, 2\}, \{1,3\}, \{1,4\}, \{2,3\}, \{2,4\}, \{3,4\}\big\}\Big),\cr\cr
&&S_5= \Big(\{1,2,3,4\}, \big\{\emptyset, \{1, 2\}, \{1,4\}, \{2,3\}, \{3,4\}, \{1,2,3,4\}\big\}\Big).
\end{eqnarray}
\end{prop}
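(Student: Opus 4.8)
We prove the contrapositive: if a delta-matroid $D=(E,\cF)$ is not binary, then $D$ has a minor isomorphic to a twist of one of $S_1,\dots,S_5$. (The converse — that a binary delta-matroid has none of these minors — is immediate: by the preceding proposition the class of binary delta-matroids is closed under elementary minors and hence, by iteration, under all minors; each $S_i$ is seen to be non-binary by a direct finite check; and since being binary is invariant under twisting, no twist of an $S_i$ is binary either. The two directions together give the ``if and only if'' form.)

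\emph{Step 1: reduction to a normal minor-minimal obstruction.} Assume some delta-matroid is not binary and pick a non-binary minor $D=(E,\cF)$ with $|E|$ minimum; then every proper minor of $D$ is binary. Recall that twisting commutes with elementary minors up to an interchange of deletion and contraction: for $e\notin A$ one has $(D\star A)\setminus e=(D\setminus e)\star A$ and $(D\star A)/e=(D/e)\star A$, while for $e\in A$ these two identities hold with $\setminus e$ and $/e$ swapped on the right. Hence every minor of $D\star A$ is a twist of a minor of $D$ and conversely, and since binarity is twist-invariant, $D\star A$ is again minor-minimal non-binary for every $A\subseteq E$. Replacing $D$ by $D\star F$ for any $F\in\cF$, we may therefore assume $\emptyset\in\cF$, i.e. that $D$ is \emph{normal}.

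\emph{Step 2: the canonical candidate matrix.} For a normal delta-matroid, being binary is equivalent to being equal to $D(A)$ for some symmetric binary matrix $A$: if $D=D(B)\star F$ with $F\in\cF(D(B))$, replace $B$ by its principal pivot transform on $F$ (the symmetric binary matrix $B'$ with $D(B')=D(B)\star F$), so that one may take $F=\emptyset$. Such a matrix, if it exists, is forced by the restrictions of $D$ to subsets of size at most $2$: set $a_{ee}=1$ iff $\{e\}\in\cF$ and, for $e\ne f$, let $a_{ef}\in\{0,1\}$ be the unique value making $A[\{e,f\}]$ invertible exactly when $\{e,f\}\in\cF$ (well defined since every delta-matroid on at most two elements is binary). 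Let $A=A(D)$ be this symmetric binary matrix; by construction $\cF(D)$ and $\cF(D(A))$ agree on all subsets of $E$ of size at most $2$. Now for any $S\subsetneq E$ the restriction $D|_S$ is a proper minor, hence binary and normal, hence equal to $D(A[S])=D(A)|_S$; taking $S=X$ for each $X\subsetneq E$ shows that $\cF(D)$ and $\cF(D(A))$ agree on \emph{every} proper subset of $E$. Since $D$ is not binary they do not agree everywhere, so
\[
\cF(D)\,\Delta\,\cF(D(A))=\{E\}.
\]

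\emph{Step 3: the ground set has at most four elements, and the main obstacle.} Granting this, it remains only to inspect the normal delta-matroids on at most four elements that are non-binary while all of their proper minors are binary; a finite computation identifies these as exactly $S_1,S_2,S_3$ (on three elements) and $S_4,S_5$ (on four elements), and reversing the normalizing twist from Step~1 then exhibits the original delta-matroid as having a minor isomorphic to a twist of some $S_i$, which is the contrapositive we wanted. To bound $|E|$ one distinguishes the two possibilities left open by Step~2: either $E\in\cF(D(A))$ (equivalently $A$ is invertible, and $\cF(D)=\cF(D(A))\setminus\{E\}$), or $E\in\cF(D)$ (in which case $D^\star=D\star E$ is again normal and, by Step~1, minor-minimal non-binary, so a parallel analysis applies to it). In either case, assuming $|E|\ge 5$, one can choose a suitable pair of feasible sets together with an element so that the Symmetric Exchange Axiom, applied to $D$ or to its dual, admits the full ground set (respectively its complement) as its only possible outcome — contradicting $\cF(D)\,\Delta\,\cF(D(A))=\{E\}$. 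This bound $|E|\le 4$ is the real content of the theorem and the step I expect to be the main difficulty: once the discrepancy between $D$ and its candidate representation has been concentrated on the single set $E$, one still has to coax $|E|\le 4$ out of the exchange axiom, handle the singular case through the dual, and make the exchange argument uniform in $n$; this is the technical core, carried out in detail in the works cited above. Steps~1 and~2, by contrast, are essentially bookkeeping plus uniqueness of GF(2)-representations of small delta-matroids, and the concluding finite inspection is routine.
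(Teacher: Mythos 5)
The paper does not actually prove this proposition: it is imported from the literature (Bouchet and Duchamp's excluded-minor characterization of GF(2)-representable delta-matroids), so there is no internal proof to compare yours against. That said, your Steps 1 and 2 are a correct and standard reduction. Minor-minimality together with the commutation of twists and elementary minors (the paper's Proposition \ref{prop:twistdelet}) lets you pass to a normal minor-minimal obstruction; the principal pivot transform lets you drop the twist from the definition of binary in the normal case; and the candidate matrix $A(D)$ is indeed forced by the feasible sets of size at most two, since over GF(2) one has $\det A[\{e,f\}]=a_{ee}a_{ff}+a_{ef}$, so each off-diagonal entry is determined. The resulting statement $\cF(D)\,\Delta\,\cF(D(A))=\{E\}$ is the right intermediate target.

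The genuine gap is Step 3, and you name it yourself. The entire content of the proposition is (a) the bound that a minor-minimal non-binary delta-matroid has at most four elements, and (b) the finite classification of the normal minor-minimal examples as exactly $S_1,\dots,S_5$. Your sketch of (a) --- ``one can choose a suitable pair of feasible sets together with an element so that the Symmetric Exchange Axiom admits the full ground set as its only possible outcome'' --- is not an argument: it does not specify which feasible sets, why such a choice must exist once $|E|\ge 5$, or how the case $E\notin\cF(D(A))$ is actually disposed of via the dual, and you explicitly defer the details to the cited works. Likewise the ``routine'' finite inspection in (b) is asserted rather than performed. As written, the proposal establishes the normalization and the concentration of the discrepancy on $E$, but not the proposition itself.
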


We will only focus on even delta-matroids and will be restricted to $S_4$ and $S_5$ because the minor of an even delta-matroid is always even. 

\begin{prop}\label{prop:twistdelet}
For any delta-matroid $D=D(E, \cF)$, $e\in E$ and $F\subset E$, we have
\begin{enumerate}
    \item $(D\star F)/e=(D/e)\star F$ if $e\notin F$,
    \item $(D\star F)/e=(D\setminus e)\star (F\setminus e)$ if $e\in F$,
    \item $(D\star F)\setminus e=(D\setminus e)\star F$ if $e\notin F$,
    \item $(D\star F)\setminus e=(D/e)\star (F\setminus e)$ if $e\notin F$. 
\end{enumerate}

\end{prop}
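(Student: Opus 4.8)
The plan is to reduce all four identities to three ingredients: the composition law for twists, the fact that contraction and deletion at a single element are interchanged by a twist at that element, and a direct verification of item (3); the remaining three items then follow by formal manipulation. First I would record that for $A,B\subseteq E$ one has $(D\star A)\star B=D\star(A\Delta B)$, which is immediate from $(A\Delta B)\Delta X=A\Delta(B\Delta X)$, and that $D/e=(D\star\{e\})\setminus e$ and $D\setminus e=(D\star\{e\})/e$, which follow by comparing feasible sets: $(D\star\{e\})\setminus e$ keeps the sets $\{e\}\Delta X$ with $e\notin\{e\}\Delta X$, i.e.\ with $e\in X$, and these are precisely the sets $X\setminus e$ with $X\in\cF$ and $e\in X$, which is the feasible-set collection of $D/e$. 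I would also note that, as written, item (4) should read ``if $e\in F$'': when $e\notin F$ we have $F\setminus e=F$, so items (3) and (4) would together force $D\setminus e=D/e$ for every $e$.

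Next I would prove item (3) directly. By Definition \ref{def:twist}, the feasible sets of $(D\star F)\setminus e$ are $\{F\Delta X : X\in\cF,\, e\notin F\Delta X\}$; since $e\notin F$, the condition $e\notin F\Delta X$ is equivalent to $e\notin X$, so this collection equals $\{F\Delta X : X\in\cF,\, e\notin X\}$, which is exactly the feasible-set collection of $(D\setminus e)\star F$ (note $F\subseteq E\setminus e$ since $e\notin F$). This proves item (3).

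The remaining items are then bookkeeping. For item (1), using $D/e=(D\star\{e\})\setminus e$, the composition law, and item (3) applied to $D\star\{e\}$ (with the same set $F$, still disjoint from $e$), one computes
\[
(D/e)\star F=\big((D\star\{e\})\setminus e\big)\star F=\big((D\star\{e\})\star F\big)\setminus e=\big(D\star(F\cup\{e\})\big)\setminus e=\big((D\star F)\star\{e\}\big)\setminus e=(D\star F)/e.
\]
For items (2) and (4), write $F=(F\setminus e)\cup\{e\}$ with $e\notin F\setminus e$, so $D\star F=(D\star(F\setminus e))\star\{e\}$ by the composition law. Then
\[
(D\star F)/e=\big((D\star(F\setminus e))\star\{e\}\big)/e=(D\star(F\setminus e))\setminus e=(D\setminus e)\star(F\setminus e),
\]
using $D'\setminus e=(D'\star\{e\})/e$ for the middle equality and item (3) for the last; this is item (2). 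Symmetrically,
\[
(D\star F)\setminus e=\big((D\star(F\setminus e))\star\{e\}\big)\setminus e=(D\star(F\setminus e))/e=(D/e)\star(F\setminus e),
\]
using $D'/e=(D'\star\{e\})\setminus e$ and then item (1); this is item (4).

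The main obstacle is the loop/coloop bookkeeping. Definition \ref{def:minor} defines $D\setminus e$ only when $e$ is not a coloop and $D/e$ only when $e$ is not a loop, with the convention $D/e=D\setminus e$ otherwise, so I must check that the manipulations above respect this. Concretely, I would verify that the single-element identities $D/e=(D\star\{e\})\setminus e$ and $D\setminus e=(D\star\{e\})/e$ remain valid when $e$ is a loop or coloop of $D$ --- here one uses that twisting at $e$ exchanges the properties ``$e$ is a loop'' and ``$e$ is a coloop'' --- and that for $e\notin F$ the element $e$ is a loop (resp.\ coloop) of $D$ if and only if it is a loop (resp.\ coloop) of $D\star F$, so that the two sides of each identity are simultaneously genuine minors or simultaneously governed by the degenerate convention. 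Both are short checks from the definitions of loop, coloop, and twist, but they are the part that requires care.
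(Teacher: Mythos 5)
Your proof is correct. Note that the paper itself states Proposition~\ref{prop:twistdelet} without proof (it is a standard result going back to Bouchet), so there is no in-paper argument to compare against; your reduction of all four identities to the composition law $(D\star A)\star B=D\star(A\Delta B)$, the exchange identities $D/e=(D\star\{e\})\setminus e$ and $D\setminus e=(D\star\{e\})/e$, and a direct verification of item (3) is a clean and complete way to establish it. You are also right that item (4) as printed contains a typo and should read ``if $e\in F$''; as you observe, otherwise items (3) and (4) would force $D\setminus e=D/e$ unconditionally. The only part you leave as a sketch is the loop/coloop bookkeeping required by Definition~\ref{def:minor}'s convention, but you correctly identify the two facts needed (twisting at $e$ swaps ``loop'' and ``coloop'', and twisting by a set avoiding $e$ preserves both), and these are indeed routine to verify.
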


\begin{definition}[Partial-dual orientable polynomial for delta-matroids \cite{MR4056111}]\label{def:partdualdeltamat}
Let $D=(E, \cF)$ be a delta-matroid. The partial-dual Euler-genus polynomial for $D$ is the generating function
\bea
\partial_{\Gamma_D}(z)=\sum_{A\subseteq E}z^{w(D\star A)}.
\eea
\end{definition}

\section{Main results}
\label{mainresults}
\begin{thm}\label{theo:evenset}
Let $E$ be a finite set and $\cF$ the set of subsets of $E$ with even cardinality. The pair
$D=(E, \cF)$ is a $\Delta$-matroid.
\end{thm}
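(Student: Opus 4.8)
The plan is to verify the Symmetric Exchange Axiom (SEA) directly for $D=(E,\cF)$, the only real input being the elementary parity fact that the symmetric difference of two even-cardinality sets is again even. First I would note that $\cF$ is nonempty, since $\emptyset\in\cF$. Next, for any $F_1,F_2\in\cF$ one has
\[
|F_1\Delta F_2|=|F_1|+|F_2|-2|F_1\cap F_2|\equiv 0\pmod 2,
\]
so $F_1\Delta F_2$ always has even cardinality.

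Now fix $F_1,F_2\in\cF$ and $x\in F_1\Delta F_2$. Since $F_1\Delta F_2$ is nonempty and of even size, it has at least two elements, so we may choose $y\in F_1\Delta F_2$ with $y\neq x$. It remains to check $F_1\Delta\{x,y\}\in\cF$: passing from $F_1$ to $F_1\Delta\{x,y\}$ toggles membership of the two \emph{distinct} elements $x$ and $y$, so $|F_1\Delta\{x,y\}|$ is one of $|F_1|$, $|F_1|+2$, $|F_1|-2$, and in each case has the same parity as $|F_1|$, hence is even. This gives the $y$ required by (SEA); note that the clause allowing $x=y$ is never invoked here, since a single parity-changing toggle could not return us to $\cF$. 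Thus (SEA) holds and $D$ is a $\Delta$-matroid.

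As an alternative, I would realize $D$ as $D(A)$ for the symmetric binary matrix $A=(a_{vw})_{v,w\in E}$ with $a_{vv}=0$ and $a_{vw}=1$ for $v\neq w$. For $W\subseteq E$ with $|W|=k$, the principal submatrix $A[W]$ equals $J_k-I_k$ over $\mathbb{F}_2$, and the matrix-determinant lemma yields $\det(A[W])=(-1)^k(1-k)\equiv 1+k\pmod 2$, which is nonzero precisely when $k$ is even. In particular $A[\emptyset]$ (the empty matrix) is nonsingular, so by the criterion recalled in Section~\ref{sect:prelimi} the set system $D(A)$ is a $\Delta$-matroid; by construction its feasible sets are exactly the even-cardinality subsets of $E$, so $D(A)=D$ (which incidentally exhibits $D$ as a binary, and even, delta-matroid).

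The proof is short and presents no serious obstacle. The one point that deserves care is verifying that $F_1\Delta F_2$ cannot be a singleton — this is precisely the parity computation above, and it is what guarantees a partner element $y\neq x$. Once that is secured, both the parity-invariance of a two-element toggle and the determinant evaluation are routine.
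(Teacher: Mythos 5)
Your first argument is correct and is essentially the paper's own proof: choose $y\neq x$ in $F_1\Delta F_2$ (possible because $|F_1\Delta F_2|$ is always even, hence never a singleton) and observe that toggling two distinct elements preserves the parity of $|F_1|$; you merely compress the paper's three-case analysis (both elements in $F_1$, both in $F_2$, one in each) into a single parity observation, which is a harmless streamlining. Your second argument is a genuinely different route the paper does not take: realizing $D$ as $D(A)$ for the all-ones-off-diagonal symmetric matrix over $\mathbb{F}_2$ and checking $\det A[W]\equiv |W|+1 \pmod 2$ is correct, and it buys the additional information that $D$ is a binary (and even) delta-matroid --- which is a useful consistency check against the paper's main theorem, since that theorem asserts no even \emph{non-binary} delta-matroid has width-preserving twists, while the $D^n$ of Theorem~\ref{theo:evenset} are exhibited as having one-term twist polynomials.
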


\proof
Let $F_1$ and $F_2$ be two sets in $\cF$ that are symmetrically different, i.e., $F_1\Delta F_2\neq \emptyset$. We want to find another set $F_3$ in $\cF$ that is obtained by swapping two elements between $F_1$ and $F_2$. To do this, we pick any $x$ in $F_1\Delta F_2$ and look for a $y$ in $F_1\Delta F_2$ such that $y\neq x$. Then we define $F_3$ as $F_1\Delta \{x, y\}$. This means that we either remove or add $x$ and $y$ to $F_1$, depending on whether they belong to $F_1$ or not. We can show that $F_3$ is always in $\cF$ by considering three cases:

$\bullet$  If $x$ and $y$ are both in $F_1$, then $F_3=F_1\setminus \{x, y\}$, which is in $\cF$ because $F_3$ is even.

$\bullet$  If $x$ and $y$ are both in $F_2$, then $F_3=F_1\cup \{x, y\}$, which is in $\cF$ because  $F_3$ is even.

$\bullet$  If $x$ is in $F_1$ but not in $F_2$, and $y$ is in $F_2$ but not in $F_1$, then $F_3$ has the same cardinality as $F_1$, which is even by assumption. Therefore, $F_3$ is in $\cF$ because $\cF$ only contains sets of even cardinality.

Note that we cannot have $F_1\Delta F_2=\{x\}$ for some $x$, because that would imply that $F_1$ and $F_2$ differ by only one element, which is impossible since they have even cardinality. Hence, we can always find a $y$ in $F_1\Delta F_2$ that is different from $x$.
\qed

Let's call the delta-matroid in Theorem \ref{theo:evenset} $D^n$, where the empty set is a feasible set, the ground set has $n$ elements, and $n$ is an odd number.

\begin{thm}\label{theo:partdual}
Let $D^n=(E, \cF)$ be the delta-matroid defined above and $A\subseteq E$.
\begin{enumerate}
    \item If $A$ has even number of elements then $\rk((D^n\star A)_{min})=0$ and $\rk((D^n\star A)_{max})=n-1$.
    \item Otherwise, $\rk((D^n\star A)_{min})=1$ and $\rk((D^n\star A)_{max})=n$.
\end{enumerate}
\end{thm}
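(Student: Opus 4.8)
The plan is to identify the feasible sets of $D^n \star A$ completely and then simply read off the cardinalities of the extreme ones. By Definition~\ref{def:twist}, the feasible sets of $D^n \star A$ are exactly the sets $A \Delta X$ as $X$ runs over all even-cardinality subsets of $E$. The first step is the parity bookkeeping: from $|A \Delta X| = |A| + |X| - 2|A \cap X|$ one sees that $|A \Delta X| \equiv |A| \pmod 2$ whenever $|X|$ is even, so every feasible set of $D^n \star A$ has cardinality of the same parity as $|A|$.

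Next I would prove the converse inclusion, namely that \emph{every} subset of $E$ whose size has the parity of $|A|$ is feasible for $D^n \star A$. This is immediate: given such a $Y$, put $X := A \Delta Y$; then $|X| \equiv |A| + |Y| \equiv 0 \pmod 2$, so $X \in \cF$, while $A \Delta X = Y$. Hence the feasible sets of $D^n \star A$ are precisely the even subsets of $E$ if $|A|$ is even (so in fact $D^n \star A = D^n$ in that case), and precisely the odd subsets of $E$ if $|A|$ is odd. Since $D^n$ is a delta-matroid by Theorem~\ref{theo:evenset} and twists of delta-matroids are delta-matroids, $(D^n\star A)_{min}$ and $(D^n\star A)_{max}$ are genuine matroids.

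It then remains to recall that the rank of the lower matroid $(D^n \star A)_{min}$ equals the common size of the smallest feasible sets, and the rank of the upper matroid $(D^n \star A)_{max}$ equals the common size of the largest feasible sets. Here the hypothesis that $n$ is odd does the real work: when $|A|$ is even, the smallest even subset of $E$ is $\emptyset$ and the largest is any $(n-1)$-subset, giving ranks $0$ and $n-1$; when $|A|$ is odd, the smallest odd subset is a singleton and the largest is $E$ itself, giving ranks $1$ and $n$. This establishes both cases of the theorem; as a byproduct the width $\rk((D^n\star A)_{max}) - \rk((D^n\star A)_{min})$ equals $n-1$ in every case, which is the feature we are ultimately after. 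There is no serious obstacle in the argument — the one point deserving attention is the parity of $n$, which is precisely what prevents the largest even subset from being all of $E$ and what keeps $\emptyset$ out of the feasible sets in the odd case.
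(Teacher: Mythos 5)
Your proof is correct and follows essentially the same elementary parity argument as the paper's, the only difference being that you first characterize \emph{all} feasible sets of $D^n\star A$ (as the subsets of $E$ with $|A|$'s parity) and then read off the extremes, whereas the paper just exhibits witnesses of sizes $0$, $n-1$, $1$, $n$ and leaves the parity exclusion of the other sizes implicit. Your version is, if anything, slightly more complete on that last point.
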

\proof
Let us start with the first point. If $A\subseteq E$ has even number of elements then $A\in \cF$ and $D^n\star A$ has the empty set as feasible and then $\rk((D^n\star A)_{min})=0$. The set $E\setminus A$ is odd and for $x\in E\setminus A$, $(E\setminus A)-x\in \cF$ and therefore $A\Delta (E\setminus A)-x\in \cF$ and contains $n-1$ elements. This ends the proof of 1).

We now turn to the case where $A$ is odd. In this case $A\notin \cF$ and then the smallest feasible set in $D^n\star A$ is non-empty and for any $x\in A$, $\{x\}\in \cF(D^n\star A)$ because $A-x\in \cF$. Therefore $\rk((D^n\star A)_{min})=1$. Since $A$ is odd then $E\setminus A\in \cF$ because it contains an even number of elements and therefore $A\Delta (E\setminus A)=E\in D^n\star A$. Hence $\rk((D^n\star A)_{max})=n$
\qed

Let us denote by $w(D^n)=\rk(D^n_{min})-\rk(D^n_{max})$, the width of $D^n$. The following results are immediate.

\begin{cor}
The evaluation of the partial-dual polynomial on the delta-matroids $D^n$ is  $\partial_{\Gamma_D}(z)=2^nz^{\frac{n-1}{2}}$.
\end{cor}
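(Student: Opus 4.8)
The plan is to read the corollary off Theorem~\ref{theo:partdual} together with Definition~\ref{def:partdualdeltamat}, with essentially no extra work. By Definition~\ref{def:partdualdeltamat},
\[
\partial_{\Gamma_{D^n}}(z)=\sum_{A\subseteq E}z^{w(D^n\star A)},
\]
and since $|E|=n$ this is a sum of exactly $2^n$ monomials. So the whole content is to check that the exponent $w(D^n\star A)$ is the \emph{same} for every $A\subseteq E$, with common value $\tfrac{n-1}{2}$; once that is done, the $2^n$ equal terms collapse to $2^n z^{(n-1)/2}$.

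To pin down the exponent I would split on the parity of $|A|$ and quote Theorem~\ref{theo:partdual}. If $|A|$ is even, part~(1) gives $\rk((D^n\star A)_{min})=0$ and $\rk((D^n\star A)_{max})=n-1$; if $|A|$ is odd, part~(2) gives $\rk((D^n\star A)_{min})=1$ and $\rk((D^n\star A)_{max})=n$. In both cases $\rk((D^n\star A)_{max})-\rk((D^n\star A)_{min})=n-1$, so the (genus-type) width is
\[
w(D^n\star A)=\tfrac12\bigl(\rk((D^n\star A)_{max})-\rk((D^n\star A)_{min})\bigr)=\tfrac{n-1}{2},
\]
which is a non-negative integer because $n$ is odd. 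Hence every summand is $z^{(n-1)/2}$ and the claimed identity follows.

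There is no real obstacle here; the substance is entirely in Theorem~\ref{theo:partdual}. The one point I would make explicit, to justify halving the difference of ranks, is that $D^n$ is an even $\Delta$-matroid and stays even under every twist: since $|A\Delta X|\equiv |A|+|X|\pmod 2$ and every $X\in\cF$ is even, all feasible sets of $D^n\star A$ have size congruent to $|A|$ modulo $2$; in particular the largest and smallest feasible sets of $D^n\star A$ have the same parity, so $\rk((D^n\star A)_{max})-\rk((D^n\star A)_{min})$ is even and the exponent appearing in Definition~\ref{def:partdualdeltamat} is a genuine integer. As a by-product this records, for this family, exactly the statement advertised in the abstract: $D^n$ has no width-changing twist, so its partial-dual polynomial is the single monomial $2^n z^{\frac{n-1}{2}}$.
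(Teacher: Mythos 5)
Your proof is correct and is precisely the ``immediate'' computation the paper intends: by Theorem~\ref{theo:partdual} every one of the $2^n$ twists $D^n\star A$ has $\rk((D^n\star A)_{max})-\rk((D^n\star A)_{min})=n-1$, so the sum in Definition~\ref{def:partdualdeltamat} collapses to a single monomial with coefficient $2^n$. Your explicit justification of the factor $\tfrac12$ (via the evenness of $D^n\star A$) is worth keeping, since the paper's displayed definition of the width as a bare difference of ranks would literally give exponent $n-1$; you have correctly supplied the orientable-genus normalization that the stated formula $2^nz^{\frac{n-1}{2}}$ presupposes.
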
 
This corollary demonstrates that the delta-matroids $D^n$ serve as natural expansions to the set of counterexamples presented in \cite{MR4185110}.

\begin{prop}
    Let $D=(E, \cF)$ be a delta-matroid and $A\subset E$. The delta-matroid $D\star A$ obtained by taking a twist of $D$ by $A$ is even (resp odd) if and only if $D$ is even (resp odd). In the same way, a minor of $D$ is even (resp odd) if $D$ is even (resp odd). 
\end{prop}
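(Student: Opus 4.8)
The plan is to reduce everything to one elementary remark about parities. Recall that a $\Delta$-matroid $D=(E,\cF)$ is \emph{even} exactly when $|F_1\Delta F_2|$ is even for all $F_1,F_2\in\cF$, equivalently when all feasible sets of $D$ have the same cardinality modulo $2$, and \emph{odd} otherwise; so it suffices to control the parity of $|F|$ as $F$ ranges over the feasible family.

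For the twist, note that for any $X\subseteq E$ one has $|A\Delta X|=|A|+|X|-2|A\cap X|\equiv |A|+|X|\pmod 2$, hence for all $X_1,X_2\in\cF$
\[
|A\Delta X_1|-|A\Delta X_2|\equiv |X_1|-|X_2|\pmod{2}.
\]
Thus passing from $D$ to $D\star A$ shifts the cardinality of \emph{every} feasible set by the same fixed parity $|A|\bmod 2$, so the feasible sets of $D\star A$ all share a parity if and only if those of $D$ do; this is exactly ``$D\star A$ is even $\iff$ $D$ is even''. Since $(D\star A)\star A=D$ the equivalence is genuinely symmetric, and as ``odd'' means ``not even'', the odd statement follows at once.

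For minors it is enough to treat an elementary minor and then induct on the length of a contraction/deletion sequence. If $e$ is a loop of $D$, no feasible set contains $e$ and $\cF(D-e)=\cF(D/e)=\cF$. If $e$ is not a coloop, $\cF(D-e)=\{F\in\cF:e\notin F\}\subseteq\cF$. If $e$ is not a loop, the map $F\mapsto F\cup\{e\}$ embeds $\cF(D/e)=\{F\subseteq E-e:F\cup\{e\}\in\cF\}$ into $\cF$ while raising every cardinality by exactly $1$; and if $e$ is a coloop then every feasible set of $D$ contains $e$, so $\cF(D/e)=\{G\setminus\{e\}:G\in\cF\}$ is a uniform $-1$ shift of $\cF$. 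In every case the feasible family of the elementary minor is obtained from a subfamily of $\cF$ by changing all cardinalities by the same constant, so parity-homogeneity is inherited; hence every minor of an even $\Delta$-matroid is even, which is exactly what is invoked afterwards to restrict attention to $S_4$ and $S_5$.

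The computations above are all routine; the one place that needs care is the ``resp.\ odd'' clause for minors. Contraction and deletion only pass to (and uniformly shift) a subfamily of $\cF$, and a parity-heterogeneous family can become homogeneous once one restricts to the feasible sets avoiding, or containing, $e$, so the odd case is not symmetric to the even one the way it is for the twist. The robust content --- and the content that is actually used in the sequel --- is that \emph{evenness} is preserved under twist (in both directions, twist being an involution) and under taking minors.
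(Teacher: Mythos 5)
The paper states this proposition without any proof, so there is nothing of the author's to compare yours against; your two verifications are exactly the routine ones the author evidently had in mind, and they are correct. For the twist, $|A\Delta X|\equiv |A|+|X|\pmod 2$ shifts every feasible cardinality by the same parity, and the involution $(D\star A)\star A=D$ gives the ``if and only if''; for elementary minors, deletion passes to the subfamily of feasible sets avoiding $e$ with cardinalities unchanged, and contraction to the subfamily containing $e$ with cardinalities uniformly lowered by one, so parity-homogeneity is inherited, and induction on the length of the deletion/contraction sequence finishes the even case. Your closing caveat is not merely cautious but decisive: the ``resp.\ odd'' clause for minors is in fact false as stated. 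For example, $D=\bigl(\{1,2\},\{\emptyset,\{1\},\{1,2\}\}\bigr)$ satisfies the Symmetric Exchange Axiom and is odd, yet $D/2=\bigl(\{1\},\{\{1\}\}\bigr)$ has a single feasible set and is therefore even. This does no harm to the rest of the paper, which only ever invokes the true half of the statement --- that twists and minors of \emph{even} delta-matroids are even, so that attention can be restricted to $S_4$ and $S_5$ --- but the proposition itself should be weakened to drop the odd clause for minors, exactly as your remark suggests.
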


\begin{prop}\label{prop:used} Let $D=(E, \cF)$ be a delta-matroid in which the emptyset is a feasible satisfying 
$w(D)=w(D\star A)$ for any $A\subseteq E$.
\begin{enumerate}    
    \item If $D$ is even then there is no element $x$ in $E$ that belongs to every $F\in \cF_{max}$. Furthermore, if $\{a, b\}\in \cF$, then for any $F\in\cF_{max}$, $a\in F$ or $b\in F$.
    \item If $E\in \cF$, then $\cF=\mathcal{P}(E)$.
\end{enumerate}
\end{prop}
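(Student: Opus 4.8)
The plan is to prove all three assertions by one mechanism: if a conclusion fails, I exhibit an explicit $A\subseteq E$ for which $w(D\star A)\neq w(D)$, contradicting the standing hypothesis. Since $\emptyset\in\cF$, the smallest feasible set of $D$ is empty, so the width of $D$ equals $M:=\rk(D_{max})$, the size of a largest feasible set; this $M$ is the quantity whose change I will track throughout.

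For the first half of (1), suppose some $x\in E$ lies in every $F\in\cF_{max}$ (then $M\ge 2$, since $M$ is even and $x$ sits inside a nonempty feasible set), and take $A=\{x\}$. I would read off the width of $D\star\{x\}=(E,\{\{x\}\Delta F: F\in\cF\})$ directly. Evenness of $D$ makes $D\star\{x\}$ odd, so all of its feasible sets have odd size; as $\{x\}=\{x\}\Delta\emptyset$ is feasible, $\rk((D\star\{x\})_{min})=1$. For the top size I split on whether $x\in F$: if $x\in F$ then $\{x\}\Delta F=F\setminus\{x\}$ has size $|F|-1\le M-1$; if $x\notin F$ then $|F|\neq M$ (a feasible set of size $M$ would be a maximum one, hence contain $x$), so $|F|\le M-2$ by evenness and $\{x\}\Delta F=F\cup\{x\}$ has size $\le M-1$; and $M-1$ is attained by taking any $F\in\cF_{max}$. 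Hence $\rk((D\star\{x\})_{max})=M-1$ and the width of $D\star\{x\}$ is $M-2\neq M$, the desired contradiction.

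For the ``furthermore'' in (1), suppose $\{a,b\}\in\cF$ but some $F\in\cF_{max}$ avoids both $a$ and $b$; take $A=\{a,b\}$. Then $D\star\{a,b\}$ contains $\emptyset=\{a,b\}\Delta\{a,b\}$ as a feasible set, while $\{a,b\}\Delta F=F\cup\{a,b\}$ is feasible of size $M+2$, so the width of $D\star\{a,b\}$ is at least $M+2>M$, again contradicting the hypothesis. For (2), the assumption $E\in\cF$ gives $\rk(D_{max})=|E|=:n$, so the width of $D$ is $n$; for an arbitrary $X\subseteq E$ apply the hypothesis with $A=X$. Every feasible set of $D\star X$ is a subset of $E$, hence has size between $0$ and $n$, so width $n$ forces the minimum feasible size to be $0$, i.e.\ $\emptyset\in\cF(D\star X)$. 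But $\emptyset=X\Delta F$ for some $F\in\cF$ means exactly $X=F\in\cF$; as $X$ was arbitrary, $\cF=\mathcal{P}(E)$.

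I do not expect a real obstacle: each case collapses to a one-line width computation for a single twist. The only step needing genuine care is the top-size estimate for $D\star\{x\}$ in the first half of (1), where evenness enters twice — once to make the twist odd (so the minimum feasible size is $1$, not $0$) and once to push a feasible set missing $x$ from size $\le M-1$ down to $\le M-2$ — so the work is really just careful parity bookkeeping rather than anything conceptual.
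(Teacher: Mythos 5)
Your proposal is correct and follows essentially the same route as the paper: in each case you twist by the explicit set ($\{x\}$, $\{a,b\}$, or an arbitrary $X\notin\cF$) and show the width changes, with the parity bookkeeping for $D\star\{x\}$ spelled out more carefully than in the paper's own (rather terse) argument. No gaps.
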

The first item of this proposition shows that the feasible set $\cF$ contains all the two elements subsets $\{a, b\}$ such that there is $F\in \cF_{max}$ and  $a\in F$ or $b\in F$. 
\proof
If there is an element $x\in E$ such that $x\in F$ for any $F\in \cF_{max}$, then $w(D\star \{x\})=w(D)-2$ because $\{x\}\notin \cF$ and there is no feasible of size $|F|-1$ in $\cF$. In case there is $\{a, b\}\in \cF$ and $F\in \cF{max}$ such that $a,b \notin F$ then $w(D\star \{a, b\})=w(D)+2$.

Assume that $E\in \cF$. If $A\subset E$ such that $A\notin \cF$ then $w(D\star A)=|E|-k<|E|=w(D)$ with $k=r((D\star A)_{min})>0$. Therefore $\cF=\mathcal{P}(E)$.
\qed

\begin{lemma}\label{lemma:atleastoneortwo}
Let's consider a matroid $M=(E, \cB)$ defined by its base set. For a base $F\in \cB$, elements $x, x'\in F$, and elements $y, y'\in E\setminus F$, we observe that if $F\Delta \{x, y\}$ and $F\Delta \{x', y'\}$ are in $\cB$, then we encounter two scenarios: either $F\Delta \{x, y\}\Delta \{x', y'\}$ is in $\cB$, or both $F\Delta \{x, y'\}$ and $F\Delta \{x', y\}$ are in $\cB$. Additionally, if $\cB$ includes a set of the form $F\Delta \{x,y\}\Delta\{x',y'\}$, then for any $\alpha, \beta \in \{y, y'\}$, the sets $F\Delta \{x,\alpha\}$ and $F\Delta \{x',\beta\}$ are also in $\cB$.
\end{lemma}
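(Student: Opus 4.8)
The plan is to deduce both assertions from the symmetric (strong) basis-exchange property of matroids: if $B_1,B_2\in\cB$ and $a\in B_1\setminus B_2$, then there is some $b\in B_2\setminus B_1$ with $(B_1\setminus a)\cup b\in\cB$ and $(B_2\setminus b)\cup a\in\cB$ simultaneously. Throughout write $B_1=F\Delta\{x,y\}$, $B_2=F\Delta\{x',y'\}$, and $B_{12}=F\Delta\{x,y\}\Delta\{x',y'\}$. Because $x,x'\in F$ and $y,y'\in E\setminus F$, the sets $F,B_1,B_2$ all have cardinality $|F|$ and so are genuine candidates for bases, while $|B_{12}|=|F|$ holds exactly when $x,x',y,y'$ are four distinct elements; if two of them coincide then $|B_{12}|\neq|F|$, so $B_{12}\notin\cB$ and the second assertion is vacuous, while for the first assertion one checks directly that, say when $x=x'$, the sets $F\Delta\{x,y'\}=B_2$ and $F\Delta\{x',y\}=B_1$ are already in $\cB$ (and symmetrically for $y=y'$), so the dichotomy holds. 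We may therefore assume $x,x',y,y'$ pairwise distinct.

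For the first assertion, in this generic case $B_1\setminus B_2=\{x',y\}$ and $B_2\setminus B_1=\{x,y'\}$. Apply symmetric exchange to $B_1,B_2$ at the element $a=x'$ and let $b\in\{x,y'\}$ be the partner it returns. If $b=x$, then $(B_1\setminus x')\cup x=F\Delta\{x',y\}$ and $(B_2\setminus x)\cup x'=F\Delta\{x,y'\}$ are both in $\cB$: this is the second alternative. If $b=y'$, then $(B_1\setminus x')\cup y'=B_{12}\in\cB$: this is the first alternative. So a single exchange already produces the required dichotomy.

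For the second assertion, assume in addition $B_{12}\in\cB$. The efficient way to organise the argument is to pass to the rank-two minor $M'=(M/(F\setminus\{x,x'\}))\,|\,\{x,x',y,y'\}$, in which a two-element subset $S\subseteq\{x,x',y,y'\}$ is a base iff $S\cup(F\setminus\{x,x'\})\in\cB$. Under this dictionary $\{x,x'\}\leftrightarrow F$, $\{y,y'\}\leftrightarrow B_{12}$, $\{x',y\}\leftrightarrow F\Delta\{x,y\}$, $\{x,y'\}\leftrightarrow F\Delta\{x',y'\}$, $\{x,y\}\leftrightarrow F\Delta\{x',y\}$ and $\{x',y'\}\leftrightarrow F\Delta\{x,y'\}$, so the four bases in hand say precisely that $\{x,x'\},\{y,y'\},\{x',y\},\{x,y'\}$ are bases of $M'$, and the conclusion to be reached is that $\{x,y\}$ and $\{x',y'\}$ are bases of $M'$ too. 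No element of $\{x,x',y,y'\}$ is a loop of $M'$, so the only obstruction to this is $x$ parallel to $y$, respectively $x'$ parallel to $y'$, in $M'$. The plan is to exclude these parallelisms by running symmetric exchange on the pairs $(F,B_{12})$, $(B_1,B_{12})$ and $(B_2,B_{12})$ at the appropriate elements — each such exchange again returns one of the two complementary pairs $\{F\Delta\{x,y\},F\Delta\{x',y'\}\}$ or $\{F\Delta\{x,y'\},F\Delta\{x',y\}\}$ — and to assemble the outcomes, together with what is available about $F$ from the ambient situation, into the statement that all four single exchanges lie in $\cB$.

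The main obstacle is exactly this assembling step: a single symmetric exchange only delivers a two-way alternative, and in an arbitrary rank-two matroid the configuration in which $\{x,y\}$ and $\{x',y'\}$ are the two parallel classes is perfectly consistent with $\{x,x'\},\{y,y'\},\{x',y\},\{x,y'\}$ all being bases. Hence the argument cannot be purely axiomatic; it has to invoke the hypotheses that $M$ carries in the application — that it is the upper matroid of the even $\Delta$-matroid under study, so that Proposition \ref{prop:used} applies (no element lies in every maximal feasible set, and every feasible two-element set meets every maximal feasible set) — and it is these that forbid the parallel-pair configuration in $M'$. I expect the cleanest write-up to be the reduction to $M'$ above followed by a short finite case check on its at most six two-element subsets.
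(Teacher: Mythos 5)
Your treatment of the first assertion is correct and is essentially the paper's own argument: the paper applies the exchange axiom twice (once at $x'\in F\Delta\{x,y\}\setminus F\Delta\{x',y'\}$, once at $x\in F\Delta\{x',y'\}\setminus F\Delta\{x,y\}$), so that if $F\Delta\{x,y\}\Delta\{x',y'\}\notin\cB$ both cross-exchanges must be bases; you instead invoke the strong symmetric exchange theorem once at $x'$, which yields the same dichotomy. The only differences are cosmetic (your version uses a deeper exchange theorem than is needed, and you additionally dispose of the degenerate cases $x=x'$ or $y=y'$, which the paper leaves implicit).

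For the second assertion your proposal does not contain a proof, only a plan, and the plan explicitly concedes that the statement cannot be derived from the matroid axioms -- so as a proof of the lemma as stated it is incomplete. However, your diagnosis is correct and points at a genuine defect in the lemma itself rather than in your attempt: taking $M=U_{1,2}\oplus U_{1,2}$ on $\{x,x',y,y'\}$ with parallel classes $\{x,y\}$ and $\{x',y'\}$ and $F=\{x,x'\}$, all of $F$, $F\Delta\{x,y\}=\{x',y\}$, $F\Delta\{x',y'\}=\{x,y'\}$ and $F\Delta\{x,y\}\Delta\{x',y'\}=\{y,y'\}$ are bases, yet $F\Delta\{x,y'\}=\{x',y'\}$ is not; so the ``for any $\alpha,\beta$'' conclusion fails. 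The paper's one-sentence proof of this part (apply the SEA to $F$ and $F\Delta\{x,y\}\Delta\{x',y'\}$) only yields the existential statement that for each of $x,x'$ \emph{some} $\alpha\in\{y,y'\}$ gives a base, not all four single exchanges, and therefore does not establish the claim either. Your suggestion that the missing ingredient must come from the ambient hypotheses (e.g.\ Proposition \ref{prop:used} applied to the upper matroid of the $\Delta$-matroid under study) is the right direction for repairing the lemma, but you would need to actually carry out that case analysis on the rank-two minor and state the extra hypotheses in the lemma; as written, neither your argument nor the paper's proves the second assertion.
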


\proof
Examining $F\Delta\{x, y\}$ and $F\Delta\{x', y'\}$ as members of $\cB$, and noting that $x'\in F\Delta\{x, y\}\setminus F\Delta\{x', y'\}$, we deduce the existence of an element $a\in F\Delta\{x', y'\}\setminus F\Delta\{x, y\}$ such that the symmetric difference $(F\Delta\{x, y\})\Delta \{x', a\}$ is in $\cB$. Consequently, $a$ must be either $x$ or $y'$, leading to the conclusion that either $F\Delta\{x', y\}$ or $F\Delta\{x, y\}\Delta\{x', y'\}$ is in $\cB$. Similarly, since $x\in F\Delta\{x', y'\}\setminus F\Delta\{x, y\}$, there must be an element $b\in F\Delta\{x, y\}\setminus F\Delta\{x', y'\}$ such that the symmetric difference $F\Delta\{x', y'\}\Delta \{x, b\}$ is in $\cB$; here, $b$ can be either $x'$ or $y$. This implies that either $F\Delta\{x, y'\}$ or $F\Delta\{x, y\}\Delta\{x', y'\}$ is in $\cB$.

This argument completes the proof of the first part of the lemma. The proof of the second part follows a similar logic, considering the sets $F$ and $F\Delta\{x,y\}\Delta\{x',y'\}$ within $\cB$ and applying the Symmetric Exchange Axiom (SEA).
\qed

\begin{remark}$\bullet$ It is not hard to see that for $i=1, \cdots 5$, there is a subset $A$ of the ground set for which $w(S_i\star A)\neq w(A)$.

$\bullet$ Remark that the feasible sets of the minimal delta-matroids $S_4$ and $S_5\star\{1,3\}$ are respectively of the form: $\{\emptyset, F, F\Delta\{1, 3\}, F\Delta\{2, 4\}, F\Delta\{1,4\}, F\Delta\{2, 3\}$, $F\Delta\{1, 3\}\Delta \{2, 4\}\}$;  and $\{F, F\Delta\{1, 3\}, F\Delta\{2, 4\}$, $F\Delta\{1,4\}, F\Delta\{2, 3\}$, $F\Delta\{1, 3\}\Delta \{2, 4\}\}$  for $F=\{1,2\}$. Therefore if $S_4$ or $S_5$ is a minor of a delta-matroid $D=(E, \cF)$ then it contains feasibles of the form $F$, $F\Delta \{x_1, x'_1\}$, $F\Delta \{x_2, x'_2\}$, $F\Delta \{x_1, x'_2\}$, $F\Delta \{x_2, x'_1\}$, $F\Delta \{x_1, x'_1\}\Delta \{x_2, x'_2\}$   where $x_1, x_2\in F$ and $x'_1, x'_2\in E\setminus F$.
\end{remark}

\begin{prop}\label{prop:conatinsfeas}
\begin{itemize}
    Let $D=(E, \cF)$ be  delta-matroid and $e\in E$. 
    \item[i)] If the delta-matroids $D\setminus e$ or $D/e$ contain feasibles of the form $F$, $F\Delta \{x_1, x'_1\}$, $F\Delta \{x_2, x'_2\}$, $F\Delta \{x_1, x'_2\}$, $F\Delta \{x_2, x'_1\}$, $F\Delta \{x_1, x'_1\}\Delta \{x_2, x'_2\}$   with $x_1, x_2\in F$ and $x'_1, x'_2\in E\setminus F$, the delta-matroid $D$ contain feasibles of the same form. Furthermore if $F$ is of maximum size in $D\setminus e$ or $D/e$, its correspondence in $D$ belongs to $D_{max}$.
    
    \item[ii)] If the delta-matroids $D\setminus e$ or $D/e$ is isomorphic to a twist of $S_i$; $i=4,5$, then there is a subset $A$ of $E$ such that $D\star A$ contains feasibles of the form $F$, $F\Delta \{x_1, x'_1\}$, $F\Delta \{x_2, x'_2\}$, $F\Delta \{x_1, x'_2\}$, $F\Delta \{x_2, x'_1\}$, $F\Delta \{x_1, x'_1\}\Delta \{x_2, x'_2\}$   with $x_1, x_2\in F$ and $x'_1, x'_2\in E\setminus F$.
\end{itemize}
\end{prop}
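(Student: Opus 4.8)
The plan is to prove part~(i) by directly unwinding the definitions of deletion and contraction, and then to deduce part~(ii) from part~(i) together with Proposition~\ref{prop:twistdelet} and the preceding Remark. For part~(i), recall that the feasible sets of $D\setm e$ are exactly the feasible sets of $D$ avoiding $e$, whereas the feasible sets of $D/e$ are the sets $G\subseteq E-e$ with $G\cup e\in\cF$. In the deletion case the six sets $F,\ F\Delta\{x_1,x_1'\},\ F\Delta\{x_2,x_2'\},\ F\Delta\{x_1,x_2'\},\ F\Delta\{x_2,x_1'\},\ F\Delta\{x_1,x_1'\}\Delta\{x_2,x_2'\}$ are then literally feasible in $D$, and the prescribed shape persists because $x_1,x_2\in F$ and $x_1',x_2'\in(E-e)\setm F\subseteq E\setm F$. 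In the contraction case I would put $\tilde F=F\cup e$; since $e\notin\{x_1,x_2,x_1',x_2'\}$ one has $(G\Delta S)\cup e=(G\cup e)\Delta S$ for every $S\subseteq\{x_1,x_2,x_1',x_2'\}$, so adjoining $e$ to the six sets above yields feasible sets of $D$ of the form $\tilde F,\ \tilde F\Delta\{x_1,x_1'\},\dots$ with $x_1,x_2\in\tilde F$ and $x_1',x_2'\in E\setm\tilde F$.

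For the ``furthermore'' assertion of part~(i) I would split according to whether $D_{\max}$ contains a feasible set avoiding $e$. If it does, then $\rk(D_{\max})=\rk((D\setm e)_{\max})$, so a maximum-size feasible set of $D\setm e$ already lies in $D_{\max}$; if it does not, then every maximum feasible set of $D$ contains $e$, hence $\rk(D_{\max})=1+\rk((D/e)_{\max})$ and a maximum-size feasible set $F$ of $D/e$ gives $F\cup e\in D_{\max}$. In the non-matching configuration the conclusion must be recovered from the Symmetric Exchange Axiom: from a feasible set on the far side of $e$ that would be strictly larger than a maximum feasible set of the minor, an exchange involving $e$ produces a feasible set of the minor of strictly larger size, a contradiction. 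Making this exchange argument airtight — in particular ensuring the exchanged element always takes a useful value, which is where the availability of the six prescribed feasible sets (and, where needed, the hypothesis $\emptyset\in\cF$) enters — is the step I expect to be the main obstacle.

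For part~(ii), suppose $D\setm e$ is isomorphic to $S_i\star B$ for some $B\subseteq\{1,2,3,4\}$ and $i\in\{4,5\}$; the case of $D/e$ is identical, using Proposition~\ref{prop:twistdelet}(1) and the contraction half of part~(i). By the Remark, $S_4$ and $S_5\star\{1,3\}$ each contain six feasible sets of the prescribed shape, and writing $S_i\star B=(S_i\star C_0)\star(C_0\Delta B)$ with $C_0=\emptyset$ if $i=4$ and $C_0=\{1,3\}$ if $i=5$ — and using that the prescribed shape is preserved by relabellings of the ground set — I obtain a subset $A_0\subseteq E-e$ for which $(D\setm e)\star A_0$ contains six feasible sets of the prescribed shape. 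Since $e\notin A_0$, Proposition~\ref{prop:twistdelet}(3) gives $(D\star A_0)\setm e=(D\setm e)\star A_0$, so the elementary minor $(D\star A_0)\setm e$ contains six feasible sets of the prescribed shape; applying part~(i) to the delta-matroid $D\star A_0$ and the element $e$ then shows that $D\star A_0$ itself does, so $A=A_0$ is the required subset. Along the way one should check that $e$ is not a coloop (resp.\ not a loop) of $D\star A_0$, so that the relevant elementary minor is defined; this follows from the corresponding fact for $D$, the condition $e\notin A_0$, and Proposition~\ref{prop:twistdelet}.
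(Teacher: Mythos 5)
Your proposal follows the same route as the paper's own proof: part (i) by unwinding the definitions of deletion and contraction (with $F'=F\cup\{e\}$ in the contraction case), and part (ii) by commuting the twist past the elementary minor via Proposition~\ref{prop:twistdelet} and then invoking part (i). Two differences are worth recording. First, in part (ii) you are more careful than the paper: since $S_5$ itself does \emph{not} contain six feasible sets of the prescribed shape (only $S_5\star\{1,3\}$ does, as the Remark notes), the extra untwist by $C_0=\{1,3\}$ that you insert is genuinely needed, whereas the paper passes directly from ``$(D\star A)\setminus e$ is isomorphic to $S_i$'' to the conclusion. Second, concerning the ``furthermore'' clause: your case split (according to whether $D_{max}$ contains a feasible set avoiding $e$) settles the matching configurations, and the residual case you flag as the main obstacle --- $F$ of maximum size in $D\setminus e$ while every maximum feasible set of $D$ contains $e$, or the dual situation for $D/e$ --- is a genuine issue, since then the copy of $F$ in $D$ has size $\rk((D\setminus e)_{max})<\rk(D_{max})$. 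But the paper does not address this either: its proof simply asserts ``there are bases of $D_{max}$ if and only if $F$ is of maximum size in $D\setminus e$'' with no argument. So you have not missed an idea that the paper supplies; you have correctly identified a point the paper glosses over, and closing it does require an exchange argument through $e$ of the kind you sketch (using evenness and $\emptyset\in\cF$ in the situations where the proposition is actually applied), or else an added hypothesis in the statement.
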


\proof
We first consider $D\setminus e$ and suppose that it contains feasible sets of the form $F$, $F\Delta \{x_1, x'_1\}$, $F\Delta \{x_2, x'_2\}$, $F\Delta \{x_1, x'_2\}$, $F\Delta \{x_2, x'_1\}$, $F\Delta \{x_1, x'_1\}\Delta \{x_2, x'_2\}$   with $x_1, x_2\in F$ and $x'_1, x'_2\in E\setminus F$. Definition \ref{def:minor} implies that none of these sets contains $e$ and they all belong to $\cF$ and there are bases of $D_{max}$ if and only if $F$ is of maximum size in $D\setminus e$. 

Let us turn to the case of $D/e$ and assume that it contains feasibles of the form $F$, $F\Delta \{x_1, x'_1\}$, $F\Delta \{x_2, x'_2\}$, $F\Delta \{x_1, x'_2\}$, $F\Delta \{x_2, x'_1\}$, $F\Delta \{x_1, x'_1\}\Delta \{x_2, x'_2\}$   with $x_1, x_2\in F$ and $x'_1, x'_2\in E\setminus F$. Applying the definition of contraction it results that $D$ contains the feasibles: $F'$, $F'\Delta \{x_1, x'_1\}$, $F'\Delta \{x_2, x'_2\}$, $F'\Delta \{x_1, x'_2\}$, $F'\Delta \{x_2, x'_1\}$, $F'\Delta \{x_1, x'_1\}\Delta \{x_2, x'_2\}$; $F'=F\cup \{e\}$ $x_1, x_2\in F$ and $x'_1, x'_2\in E\setminus F$. There are clearly bases of $D_{max}$ if and only if $F$ is of maximum size in $D/ e$.  This ends the proof of the first item.

For the second item, if $D\setminus e$ is isomorphic to a twist of $S_i$; $i=4, 5$, then there is a subset $A$ of $E\setminus e$ such that  $D\setminus e$ is isomorphic to $S_i\star A$. Proposition \ref{prop:twistdelet} implies that $(D\setminus e)\star A=(D\star A)\setminus e$ is isomorphic to $S_i$. Using the result in the first item, $D\star A$ contains the feasible sets of the form: $F$, $F\Delta \{x_1, x'_1\}$, $F\Delta \{x_2, x'_2\}$, $F\Delta \{x_1, x'_2\}$, $F\Delta \{x_2, x'_1\}$, $F\Delta \{x_1, x'_1\}\Delta \{x_2, x'_2\}$; $x_1, x_2\in F$ and $x'_1, x'_2\in E\setminus F$. We obtain same result by replacing the deletion by a contraction.
\qed

 The results in Proposition \ref{prop:conatinsfeas} also applies for any minor $M$ associated to a given  delta-matroid $D$. Meaning that if $M$ contains feasible sets of the form $F$, $F\Delta \{x_1, x'_1\}$, $F\Delta \{x_2, x'_2\}$, $F\Delta \{x_1, x'_2\}$, $F\Delta \{x_2, x'_1\}$, $F\Delta \{x_1, x'_1\}\Delta \{x_2, x'_2\}$; $x_1, x_2\in F$ and $x'_1, x'_2\in E\setminus F$, then $D$ also has feasible sets of the same form. Furthermore they belong to $D_{max}$ if there correspondence in $M$ belong to $M_{max}$ where $M_{max}$ is the upper matroid associated to $M$. 

\begin{thm}
There is no even non binary $\Delta$-matroid  $D=(E, \cF)$ in which the emptyset is a feasible satisfying 
$w(D)=w(D\star A)$ for any $A\subseteq E$.
\end{thm}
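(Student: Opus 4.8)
The plan is to argue by contradiction: suppose $D=(E,\cF)$ is an even $\Delta$-matroid in which $\emptyset$ is feasible and which satisfies $w(D)=w(D\star A)$ for every $A\subseteq E$, yet $D$ is \emph{non-binary}. By Proposition \ref{prop:minimalnonbinary} and the remark that minors of even $\Delta$-matroids are even, $D$ must have a minor isomorphic to a twist of $S_4$ or of $S_5$. The first reduction is to push this obstruction back up to $D$ itself using the machinery already developed: by the discussion following Proposition \ref{prop:conatinsfeas}, there is a subset $A\subseteq E$ such that $D\star A$ contains feasible sets of the form
\[
F,\quad F\Delta\{x_1,x_1'\},\quad F\Delta\{x_2,x_2'\},\quad F\Delta\{x_1,x_2'\},\quad F\Delta\{x_2,x_1'\},\quad F\Delta\{x_1,x_1'\}\Delta\{x_2,x_2'\},
\]
with $x_1,x_2\in F$ and $x_1',x_2'\in E\setminus F$. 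Since the width is twist-invariant by hypothesis, $w(D\star A)=w(D)$, so after replacing $D$ by $D\star A$ we may assume $D$ itself carries such a configuration; moreover, by the last clause of Proposition \ref{prop:conatinsfeas}(i), we may take $F$ (the top of this ``cube'') to lie in $D_{\max}$, i.e. $|F|=\rk(D_{\max})$.

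The heart of the argument is then to derive a contradiction from the coexistence of this $S_4/S_5$-type configuration in $D_{\max}$ with the structural constraints on $D_{\max}$ coming from Proposition \ref{prop:used}. The key players are the four ``side'' feasibles $F\Delta\{x_1,x_1'\}$, $F\Delta\{x_2,x_2'\}$, $F\Delta\{x_1,x_2'\}$, $F\Delta\{x_2,x_1'\}$, all of which have the same cardinality as $F$ and hence lie in $D_{\max}$; together with $F$ these are five bases of the upper matroid $M=D_{\max}$. I would now invoke Lemma \ref{lemma:atleastoneortwo} inside $M$: feeding it the base $F$ and the exchanges producing, say, $F\Delta\{x_1,x_1'\}$ and $F\Delta\{x_2,x_2'\}$, and using that the ``diagonal'' $F\Delta\{x_1,x_1'\}\Delta\{x_2,x_2'\}$ is \emph{also} a feasible of the same size, the second part of the lemma forces all four mixed sets $F\Delta\{x_i,x_j'\}$ into $\cB(M)$ — which we already know — but also lets one propagate: pairing the diagonal base with further elements of $E$ should generate yet more bases, and in particular a basis witnessing that some single element $x\in E$ lies in \emph{every} maximum feasible set, or else that some $2$-element feasible $\{a,b\}$ exists that is disjoint from a maximum feasible set. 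Either conclusion contradicts Proposition \ref{prop:used}(1). The cleanest route is probably: from the cube, $\{x_1',x_2'\}$-type symmetric differences inside $M$ combined with SEA show $\cF$ contains a $2$-element set, and then one exhibits an $F'\in\cF_{\max}$ avoiding it, or shows a fixed coordinate persists across $\cF_{\max}$, contradicting the twist-invariance of width.

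The step I expect to be the main obstacle is exactly this last contradiction — bridging from ``$D_{\max}$ contains an $S_4$/$S_5$ cube'' to ``$D_{\max}$ violates the no-common-element / spanning property of Proposition \ref{prop:used}(1).'' The difficulty is that Lemma \ref{lemma:atleastoneortwo} is a local exchange statement, and one must carefully iterate it (choosing the right base, the right pair of exchange elements, and the right fresh element of $E$ to exchange against) to reach a \emph{global} statement about all of $\cF_{\max}$; there may be several cases depending on whether the exchanges in the cube are ``independent'' in $M$ or share circuits, and in the degenerate cases ($n$ small, or $E$ nearly exhausted by $F\cup\{x_1',x_2'\}$) one likely has to appeal directly to Proposition \ref{prop:used}(2) (if $E\in\cF$ then $\cF=\mathcal P(E)$, which is the binary even $\Delta$-matroid $D^n$ excluded by hypothesis) rather than to the matroid lemma. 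Organizing these cases so that every branch lands on a violation of Proposition \ref{prop:used} is the crux; everything else is bookkeeping with Definition \ref{def:minor} and Proposition \ref{prop:twistdelet}.
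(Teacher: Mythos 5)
Your setup matches the paper's: assume $D$ is non-binary, invoke Proposition \ref{prop:minimalnonbinary} together with evenness to get a minor that is a twist of $S_4$ or $S_5$, lift the resulting ``cube'' of six feasible sets $F$, $F\Delta\{x_i,x_j'\}$, $F\Delta\{x_1,x_1'\}\Delta\{x_2,x_2'\}$ back into $D$ via Proposition \ref{prop:conatinsfeas}, and then play this configuration off against Proposition \ref{prop:used} and Lemma \ref{lemma:atleastoneortwo}. That is exactly the paper's strategy. But the proposal stops precisely where the actual proof begins. The entire content of the paper's argument is the derivation of a contradiction from the coexistence of the cube with the constraints of Proposition \ref{prop:used}, and this is carried out by an explicit, multi-page case analysis: small ground sets ($|E|\le 4$) are handled directly against the lists $S_2$, $S_4$, $S_5$; then the cases $|E|=5,6,7,8$ with $\rk(D_{\max})\in\{2,4,6\}$ are treated one by one, each time using the SEA between $\emptyset$ and the maximum feasibles to show that certain two-element sets must or must not lie in $\cF$, and iterating Lemma \ref{lemma:atleastoneortwo} to force $F\Delta\{x_q,x_2'\}\Delta\{x_i,x_1'\}\in\cF$ and hence $\{x_q,x_i\}\notin\cF$ for enough pairs to contradict the SEA applied to $F$ and $\emptyset$; finally the general $E=\{x_1,\dots,x_m,x_1',\dots,x_p'\}$ case is settled by an induction on how many sets of the form $F\Delta\{\alpha,x_2'\}$ are feasible. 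You explicitly flag this bridge as ``the main obstacle'' and offer only conditional language (``the cleanest route is probably\dots'', ``one likely has to appeal\dots''); that is a statement of the difficulty, not a proof of it.

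Two further concrete problems with the reduction you do carry out. First, you replace $D$ by $D\star A$ and then want to apply Proposition \ref{prop:used}, but that proposition requires $\emptyset\in\cF$; since $D$ is even, $\emptyset$ is feasible in $D\star A$ only when $A$ is itself feasible, which your choice of $A$ (coming from the twist identifying the minor with $S_i$) does not guarantee. The paper avoids this by arguing directly that $D$ itself contains a cube based at some $F\in\cF_{\max}$. Second, your claim that ``we may take $F$ to lie in $D_{\max}$'' does not follow from Proposition \ref{prop:conatinsfeas}(i): that clause only transfers maximality \emph{if} $F$ is already of maximum size in the minor, which the $S_4$/$S_5$ minor does not automatically provide. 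Both points need an argument, and neither is supplied.
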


\begin{proof}
We assume for simplicity that every $x\in E$ is in some feasible set of $D$. Otherwise, for any $x\in E$ such that $x\notin F$ for all $F\in \cF$, we have $w(D\star A)=w(D\star (A\setminus \{x\}))$ for any subset $A$ of $E$ that includes $x$.

$\bullet$ If $|E|=1$, then $\cF=\mathcal{P}(E)$, which cannot be, since $(E, \mathcal{P}(E))$ is not an even $\Delta$-matroid.

$\bullet$ Let $E=\{x_1, x_2\}$. If $r(D_{max})=1$, we use the previous result. Otherwise, $\cF_{max}$ has all the subsets of $E$ with one element and $w(D)=1\neq 2=w(D\star \{x_1\})$. $\mathcal{F}_{max}$ cannot have a subset of size 2, because then the second point in Proposition \ref{prop:used} would imply that $\cF=\mathcal{P}(E)$, which is not an even $\Delta$-matroid. 

$\bullet$ Let $E=\{x_1, x_2, x_3\}$. As in the previous case, $\mathcal{F}{max}$ cannot have sets of size 3, or else $\cF=\mathcal{P}(E)$. Now suppose that $r(D_{max})=2$. The only non-binary $\Delta$-matroid is $S_2$, but $w(S_2\Delta \{1\})=3$, which is impossible.

$\bullet$ Let $E=\{x_1, x_2, x'_1, x'_2\}$. Assume that $r(D{max})=2$. Since $D$ is an even non-binary, it has feasible sets of the form $S_4$ or $S_5$, but $w(S_4)=2\neq 4=w(S_4\star {1, 2})$ and $w(S_5)=4\neq 0=w(S_5\star {1, 3})$. In fact, if $r(D{max})=2$, then $D$ has feasible sets of the form $F=\{x_1, x_2\}$, $F\Delta\{x_1, x'_1\}$, $F\Delta\{x_2, x'_2\}$, $F\Delta\{x_1, x'_2\}$, $F\Delta\{x_2, x'_1\}$ and $F\Delta\{x_1, x'_1\}\Delta \{x_2, x'1\}$, which is isomorphic to $S_4$. If $r(D{max})=4$, then $D$ must be $\mathcal{P}(E)$, which is not an even $\Delta$-matroid.

$\bullet$ Now let $E=\{x_1, x_2, x_3, x_4, x'_1\}$. If $r(D{max})=2$, the previous case applies and works for any delta-matroid with more than four elements in the ground set. We assume that $r(D_{max})=4$. From Proposition \ref{prop:used}, $\cF=\{\emptyset, \{x_1, x_2\}, \{x_1, x_3\}, \{x_1, x_4\}, \{x_1, x_4\}$, $\{x_1, x'_1\},  \{x_2, x_3\}, \{x_2, x_4\}, \{x_2, x'_1\}, \{x_3, x_4\}, \{x_3, x'_1\}, \{x_4, x'_1\}, F, F\Delta\{x_1, x'_1\}, F\Delta\{x_2, x'_1\}$, $F\Delta\{x_3, x'_1\}, F\Delta\{x_4, x'_1\}\}$ with $F=\{x_1, x_2, x_3, x_4\}$, but this delta-matroid is binary.

$\bullet$ Suppose that $E=\{x_1, x_2, x_3, x_4, x'_1, x'_2\}$ and $r(D{max})=4$ with $F=\{x_1, x_2, x_3, x_4\}\in \cF_{max}$. Since $D$ is non-binary, it has feasible sets of the form $F$, $F\Delta\{x_1, x'_1\}$, $F\Delta\{x_2, x'_2\}$, $F\Delta\{x_1, x'_2\}$, $F\Delta\{x_2, x'_1\}$ and $F\Delta\{x_1, x'_2\}\Delta \{x_2, x'_2\}$. According to Proposition \ref{prop:used}, $x_j$; $j=3, 4$ does not belong to a feasible set in $\cF_{max}$. This leaves us with two options based on the SEA: 1) $F\Delta\{x_3, x'_1\}$, $F\Delta\{x_4, x'_1\}$ are feasible sets or 2) $F\Delta\{x_3, x'_1\}$, $F\Delta\{x_4, x'_2\}$ $\in \cF$. The same proposition implies that $\{x'_1, x'_2\}$, $\{x_1, x'_2\}$, $\{x_2, x'_1\}$, $\{x_1, x'_1\}$, $\{x_2, x'_2\}$ and $\{x_1, x_2\}$ $\notin \cF$.

The first case leads to the fact that $\{x'_2, x_1\}$, $\{x'_2, x_3\}$ and  $\{x'_2, x_4\}$ $\notin \cF$, which shows that $F\Delta\{x_2, x'_2\}=\{x_1, x'_2, x_3, x_4\}$ cannot be a feasible set, because it violates the SEA on the empty set and $\{x_1, x'_2, x_3, x_4\}$. Since $D_{max}$ is a matroid, applying Lemma \ref{lemma:atleastoneortwo} on the second case gives: $F\Delta\{x_3, x'_2\}$, $F\Delta\{x_4, x'_1\}$ $\in \cF$ (which goes back to the first case) or $F\Delta{x_3, x'_1}\Delta{x_4, x'_2}\in \cF$.
We consider the case where only $F\Delta\{x_3, x'_1\}\Delta\{x_4, x'_2\}\in \cF$, which implies that $\{x_3, x_4\}\notin \cF$. Applying Lemma \ref{lemma:atleastoneortwo} again on $F\Delta\{x_3, x'_1\}$ and $F\Delta\{x_1, x'_2\}$ or $F\Delta\{x_2, x'_2\}$, we return to the first case or we get $F\Delta\{x_3, x'_1\}\Delta\{x_1, x'_2\}\in \cF$ and $F\Delta\{x_3, x'_1\}\Delta\{x_2, x'_2\}\in \cF$, which implies that $\{x_3,x_1\}, \{x_3,x_2\}\notin \cF$. This contradicts the fact that the SEA between $F$ and the empty set should give at least one of the following: $\{x_3,x_1\}, \{x_3,x_2\}, \{x_3,x_4\}\in \cF$.

$\bullet$ We now assume that $E=\{x_1, x_2, x_3, x_4, x'_1, x'_2, x'_3\}$ and $r(D_{max})=4$ with $F=\{x_1, x_2, x_3, x_4\}\in \cF_{max}$. Since $D$ is non binary, its has feasible set of the form:  $F$, $F\Delta\{x_1, x'_1\}$, $F\Delta\{x_2, x'_2\}$, $F\Delta\{x_1, x'_2\}$, $F\Delta\{x_2, x'_1\}$ and $F\Delta\{x_1, x'_2\}\Delta \{x_2, x'_2\}$. Applying Proposition \ref{prop:used} and the SEA, we have the following possibilities: 1) $F\Delta\{x_3, x'_1\}$, $F\Delta\{x_4, x'_1\}$ $\in \cF$, 2) $F\Delta\{x_3, x'_1\}$, $F\Delta\{x_4, x'_2\}$ are feasible sets, 3) $F\Delta\{x_3, x'_3\}$, $F\Delta\{x_4, x'_3\}$ $\in \cF$ or 4) $F\Delta\{x_3, x'_1\}$, $F\Delta\{x_4, x'_3\}$ $\in\cF$. The same proposition implies that no pair in $\{x'_1, x'_2, x'_3\}$, $\{x_1, x'_2, x'_3\}$, $\{x_2, x'_1, x'_3\}$, $\{x_1, x'_1, x'_3\}$, $\{x_2, x'_2, x'_3\}$ and $\{x_1, x_2, x'_3\}$ belongs to $\cF$. Proceeding in a similar way as earlier, each of these cases breaks the SEA.    

    If $r(D_{max})=6$, the result follows the same analysis made in the case $|E|=5$,  $r(D_{max})=4$.

     $\bullet$ Consider $E=\{x_1, x_2, x_3, x_4, x_5, x_6, x'_1, x'_2\}$ and $r(D_{max})=4$ with $F=\{x_1, x_2, x_3, x_4\}$ element of $\cF_{max}$. Since $D$ is non binary, it has feasible sets of the form:  $F$, $F\Delta\{x_1, x'_1\}$, $F\Delta\{x_2, x'_2\}$, $F\Delta\{x_1, x'_2\}$, $F\Delta\{x_2, x'_1\}$ and $F\Delta\{x_1, x'_2\}\Delta \{x_2, x'_2\}$. Since $F\Delta\{x_1, x'_1\}$ and $ F\Delta\{x_2, x'_2\}$ belong to $\cF$, then $\{x_1, x_5\}, \{x_2, x_5\}\notin \cF$. Otherwise $W(D\star \{x_1, x_5\})=W(D)+2=W(D\star \{x_2, x_5\})$ because of the following relations $(F\Delta\{x_1, x'_1\})\Delta\{x_1, x_5\}=F\cup\{x_5, x'_1\}$ and $(F\Delta\{x_2, x'_2\})\Delta\{x_2, x_5\}=F\cup\{x_5, x'_2\}$.
     Using Proposition \ref{prop:used} and the SEA, $F\Delta\{x_3,\alpha\}$, $F\Delta\{x_4,\alpha\}\in \cF$ for $\alpha=x_5, x_6, x'_1, x'_2$. The case $F\Delta\{x_3,\alpha\}, F\Delta\{x_4,\alpha\}\in \cF$ for $\alpha= x'_1, x'_2$ is already studied in the previous case. Now assume that $F\Delta\{x_3, x_5\}\in \cF$ and $F\Delta\{x_4, x_6\}\in \cF$.
     This is impossible because $F\Delta\{x_4, x_6\}\in \cF$ implies that $\{x_5, x_4\}\notin \cF$ and therefore contradict the fact that $F\Delta\{x_3, x_5\}\in \cF$ since the SEA between the empty set and $F\Delta\{x_3, x_5\}$ implies that there should exist $\alpha=x_1, x_2, x_4$ such that $\{x_5, \alpha\}\in \cF$. If instead we have $F\Delta\{x_3, x_5\}, F\Delta\{x_4, x_5\}\in \cF$ then it contradicts the fact that $F\Delta\{x_1, x'_1\}\Delta\{x_2, x'_2\}\in \cF$ because $\{x'_2, x'_1\}, \{x'_2, x_3\}, \{x'_2, x_4\}\notin \cF$ from the fact that $F\in \cF$ and $F\Delta\{x_3, x_5\}, F\Delta\{x_4, x_5\}\in \cF$. If otherwise we have $F\Delta\{x_3, x_5\}, F\Delta\{x_4, x'_1\}\in \cF$ and $F\Delta\{x_3, x_5\}\Delta\{x_4, x'_1\}\notin \cF$ then $F\Delta\{x_3, x'_1\}, F\Delta\{x_4, x_5\}\in \cF$ which has just been studied earlier. Otherwise if $F\Delta\{x_3, x_5\}\Delta\{x_4, x'_1\}\in \cF$ then $\{x_4, x'_1\}, \{x_3, x'_1\}\notin \cF$. Furthermore $\{x'_2, x'_1\}\notin \cF$ contradicts the fact that $F\Delta\{x_1, x'_1\}\Delta\{x_2, x'_2\}\in \cF$. This is obtained by applying the SEA between the empty set and  $F\Delta\{x_1, x'_1\}\Delta\{x_2, x'_2\}=\{x'_1, x'_2, x_4, x_4\}$.

Let's assume that $r(D_{max})=6$. Given that $D$ is non-binary, we have the feasible sets $F$, $F\Delta \{x_1, x'_1\}$, $F\Delta \{x_2, x'_2\}$, $F\Delta \{x_1, x'_2\}$, $F\Delta \{x_2, x'_1\}$, and $F\Delta \{x_1, x'_2\}\Delta \{x_2, x'_2\}$ within $\cF$, where $F$ is given by $F=\{x_1, x_2, x_3, x_4, x_5, x_6\}$. It follows that none of the sets $\{x'_1, x'_2\}$, $\{x_1, x'_2\}$, $\{x_2, x'_1\}$, $\{x_1, x_2\}$, $\{x_1, x'_1\}$, or $\{x_2, x'_2\}$ are in $\cF$. According to Proposition \ref{prop:used} and the SEA, for each $i=3,4,5,6$ and $\alpha=x'_1,x'2$, the set $F\Delta \{x_i,\alpha\}$ is in $\cF$. We can consider two cases without loss of generality:

$1)$ The sets $F\Delta \{x_3, x'_1\}$, $F\Delta \{x_4, x'_1\}$, $F\Delta \{x_5, x'_1\}$, and $F\Delta \{x_6, x'_2\}$ are feasible.

$2)$  The sets $F\Delta \{x_3, x'_1\}$, $F\Delta \{x_4, x'_1\}$, $F\Delta\{x_5, x'_1\}$, and $F\Delta \{x_6, x'_2\}$ are in $\cF$.

In the first case, this means that the sets $\{x_3, x'_2\}$, $\{x_4, x'_2\}$, $\{x_5, x'_2\}$, and $\{x_6, x'_1\}$ are not in $\cF$, which implies that $\{x_6, x'_2\}$ must be in $\cF$ because the symmetric difference $F\Delta \{x_2, x'_2\}$ results in $\{x_1, x'_2, x_3, x_4, x_5, x_6\}$ and and the empty set belong to $\cF$. However, having both $F\Delta \{x_6, x'_2\}$ and $F\Delta \{x_1, x'_1\}$ in $\cF$ would necessitate that either $F\Delta \{x_6, x'_1\}$ is in $\cF$ (which cannot be since $\{x_6, x'_2\}$ is in $\cF$) or the symmetric difference $F\Delta \{x_6, x'_2\}\Delta \{x_1, x'_1\}$, which equals $\{x'_1, x_2, x_3, x_4, x_5, x'_2\}$, is in $\cF$. The latter is also not possible because none of the sets $\{x'_2, \alpha\}$ for $\alpha \in \{x'_1, x_2, x_3, x_4, x_5\}$ are in $\cF$.

Now, let's explore the more general case where $E= \{x_1, \dots, x_m, x'_1, \dots, x'_p\}$, meaning $|E|=m+p$, and $F=\{x_1, \dots, x_m\}$. Since $D$ is non-binary, it includes feasible sets of the form $F$, $F\Delta \{x_1, x'_1\}$, $F\Delta \{x_2, x'_2\}$, $F\Delta \{x_1, x'_2\}$, $F\Delta \{x_2, x'_1\}$, and $F\Delta \{x_1, x'_2\}\Delta \{x_2, x'_2\}$.

From the assumption we made from the beginning of the proof, there is a feasible set $F_j\in \cF$ such that $x'_j\in F_j$ for any $j=3, \cdots, m$. The SEA implies that for each  $i=1, \cdots, m$ there is $j=1, \cdots, p$  such that $F\Delta \{x_i, x'_j\}\in \cF$.

Let's consider that for each $i$ from 1 to $m$, there exists a $j$ from 3 to $p$ such that the symmetric difference $F\Delta \{x_i, x'_j\}$ is included in $\cF$. This implies that the set $\{x_i, x'_1\}$ is not in $\cF$ for any $i$ in the range from 1 to $m$. If it were otherwise, applying the Symmetric Exchange Axiom (SEA) to $\{x_i, x'_1\}$ and $F\Delta \{x_i, x'_j\}$ would lead to the inclusion of $F\cup \{x'_1, x'_j\}$ in $\cF$, which cannot occur. Moreover, the assertion that $\{x_i, x'_1\}$ is excluded from $\cF$ for each $i$ from 1 to $m$ contradicts the fact that $F\Delta \{x_1, x'_1\}$ is a member of $\cF$. Furthermore, if for all $i$ from 1 to $m$, the set $F\Delta \{x_i, x'_1\}$ belongs to $\cF$, then the set $\{x_i, x'_2\}$ must not be in $\cF$, which would be in conflict with the established fact that $F\Delta \{x_2, x'_2\}$ is part of $\cF$.

Suppose, without loss of generality, that for some $q$ within the set $\{1, \ldots, m\}$, the set $F\Delta \{x_q, x'_2\}$ is in $\cF$. However, for all $i$ from 1 to $m$, excluding $q$, the set $F\Delta \{x_i, x'_1\}$ is also in $\cF$. Considering both $F\Delta \{x_q, x'_2\}$ and $F\Delta \{x_i, x'_1\}$ for all $i$ not equal to $q$, and applying Lemma \ref{lemma:atleastoneortwo}, we find that either $F\Delta \{x_q, x'_1\}$ is in $\cF$ or the symmetric difference $F\Delta {x_q, x'_2}\Delta {x_i, x'_1}$ is in $\cF$ for any $i$ in the set $\{1, \ldots, m\}$ excluding $q$. The former case circles back to a previously examined scenario. In the latter case, the pair $\{x_q, x_i\}$ cannot be in $\cF$ for any $i$ not equal to $q$, which contradicts the SEA applied to the empty set and $F$.

Next, let's assume there exist $q, r$ within the set $\{1, \ldots, m\}$, excluding $\{1,2\}$, and distinct from each other, such that both $F\Delta \{x_q, x'_2\}$ and $F\Delta \{x_r, x'_2\}$ are in $\cF$, but for all $i$ from 1 to $m$, excluding $q$ and $r$, the set $F\Delta \{x_i, x'_1\}$ is in $\cF$. Consequently, the sets $F\Delta \{x_1, x'_2\}$, $F\Delta \{x_2, x'_2\}$, $F\Delta \{x_q, x'_2\}$, and $F\Delta \{x_r, x'_2\}$ are the sole members of $\cF$ that can be expressed as $F\Delta \{\alpha, x'_2\}$. This means that for each $F\Delta \{x_i, x'_1\}$, where $i$ is in the set $\{2, \ldots, m\}$ excluding $q$ and $r$, and for $F\Delta \{x_1, x'_2\}$, the symmetric difference $F\Delta \{x_i, x'_1\}\Delta \{x_1, x'_2\}$ is in $\cF$ according to Lemma \ref{lemma:atleastoneortwo}. Thus, the pair $\{x_1, x_i\}$ is not in $\cF$ for any $i$ in the set $\{2, \ldots, m\}$ excluding $q$ and $r$. If either $F\Delta \{x_q, x'_1\}$ or $F\Delta \{x_r, x'_1\}$ is in $\cF$, we revert to the previous case. If not, considering $F\Delta \{x_1, x'_1\}$ and each $F\Delta \{x_j, x'_2\}$ for $j=q, r$, Lemma \ref{lemma:atleastoneortwo} implies that the symmetric difference $F\Delta \{x_1, x'_1\}\Delta \{x_j, x'_2\}$ is in $\cF$ for $j= q, r$, leading to the conclusion that the pair ${x_1, x_j}$ is not in $\cF$ for $j= q, r$. Ultimately, no pair of the form $\{x_1, x_i\}$, where $i$ ranges from 2 to $m$, exists in $\cF$, contradicting the SEA applied to $F$ and the empty set.

If we proceed inductively, we arrive at the same conclusion if $\cF$ includes additional elements of the form $F\Delta \{\alpha, x'_2\}$.
\end{proof}

\bibliographystyle{alpha}
\bibliography{references}
\end{document}